\newtheorem{proposition}{Proposition}[section]
\newtheorem{lemma}[proposition]{Lemma}
\newtheorem{theorem}[proposition]{Theorem}
\newtheorem{corollary}[proposition]{Corollary}
\newtheorem{property}[proposition]{Property}
\newtheorem{remark}[proposition]{Remark}
\newtheorem{algorithm}[proposition]{Algorithm}
\begin{document}
\begin{CJK*}{GBK}{song}
\CJKindent

\centerline{\textbf{\LARGE{The structure of palindromes}}}

\vspace{0.2cm}

\centerline{\textbf{\LARGE{in the Fibonacci sequence
and some applications}}}

\vspace{0.2cm}

\centerline{Huang Yuke\footnote[1]{School of Mathematics and Systems Science, Beihang University (BUAA), Beijing, 100191, P. R. China.}$^,$\footnote[2]{E-mail address: huangyuke07@tsinghua.org.cn.}
~~Wen Zhiying\footnote[3]{Department of Mathematical Sciences, Tsinghua University, Beijing, 100084, P. R. China.}$^,$\footnote[4]{E-mail address: wenzy@tsinghua.edu.cn(Corresponding author).}}

\vspace{1cm}

\centerline{\textbf{\large{ABSTRACT}}}

\vspace{0.2cm}

Let ${\cal P}$ be the set of palindromes occurring in the Fibonacci sequence.
In this note, we establish three structures of $\mathcal{P}$ and and discuss their properties:
cylinder structure, chain structure and recursive structure. Using these structures, we determine that
the number of distinct palindrome occurrences in $\mathbb{F}[1,n]$ is exactly $n$, where $\mathbb{F}[1,n]$
is the prefix of the Fibonacci sequence of length $n$. Then we give an algorithm for counting the number of repeated palindrome occurrences in $\mathbb{F}[1,n]$, and get explicit expressions for some special $n$,
which include the known results. We also give simpler proofs of some classical properties, such as in X.Droubay\cite{D1995}, W.F.Chuan\cite{C1993} and J.Shallit et al\cite{DMSS2014}.

\vspace{0.2cm}

\noindent\textbf{Key words:} the Fibonacci sequence; palindrome; structure; algorithm;
the sequence of return words.

\section{Introduction}

As a classical example over a binary alphabet, the Fibonacci sequence, having many remarkable properties, appears in many aspects of mathematics and computer science
etc., we refer to M.Lothaire\cite{L1983,L2002}, J.M.Allouche and J.Shallit\cite{AS2003},
Berstel\cite{B1966,B1980}.

In this paper, we establish and discuss three structures of palindromes in the Fibonacci sequence.
These structures are important because palindromes are objects of a great interest in computer science, etc.
In 1993 and 1995, W.F.Chuan\cite{C1993} and X.Droubay\cite{D1995} determined the number of palindromes in conjugations of $F_m$ respectively, where $F_m=\sigma^m(a)$ and $\sigma$ is the Fibonacci morphism. X.Droubay\cite{D1995} also determined the number of palindromes of length $n$.
In 2014, C.F.Du, H.Mousavi, L.Schaeffer and J.Shallit\cite{DMSS2014} showed the necessary and sufficient condition of $\mathbb{F}[1,n]$ is a palindrome.
In this paper, 
we will give simper proofs of these properties above by the structures of palindromes, see Corollary 3.4, 3.5 and Remark 4.3.

On the other hand, as new results, we first determine the number of distinct palindrome occurrences in $\mathbb{F}[1,n]$ is exactly $n$ (Theorem 4.5), then we give an algorithm for counting
the number of repeat palindrome occurrences in $\mathbb{F}[1,n]$ (Algorithm 6.9), and give explicit expressions in some particular cases, such as when $n=f_m$ etc. (Theorem 6.7).

The main tool of this paper is the structure property of the sequence of return words in the Fibonacci sequence which introduced and studied in \cite{HW2015-1}.
By this property, we can determine the positions of all occurrences for each palindrome, and establish the sequence properties of them.
Moreover, we can count the number of palindromes occurrences in each prefix of $\mathbb{F}$, not only in $F_m$.

This paper is organized as follows.
Section 2 present some basic notations and known results. Sections 3 to 5 are devoted to establish three structure properties of palindromes in the Fibonacci sequence: cylinder sets, chain structure and recursive structure. In Sections 4 and 6, using the structure properties mentioned above, we determined also the number of distinct  and repeated palindrome occurrences in $\mathbb{F}[1,n]$ for all $n$ respectively.

\section{Preliminaries}

Let $\mathcal{A}=\{a,b\}$ be a binary alphabet.
A word is a finite string of elements in $\mathcal{A}$.
The set of all finite words on $\mathcal{A}$ is denoted by $\mathcal{A}^\ast$, which is a free monoid generated by $\mathcal{A}$ with the concatenation operation.
The concatenation of two words $\nu=x_1x_2\cdots x_r$ and $\omega=y_1y_2\cdots y_n$ is the word $x_1x_2\cdots x_ry_1y_2\cdots y_n$, denoted by $\nu\omega$. This operation is associative and has a unit element, the empty word $\varepsilon$.
The set $\mathcal{A}^\ast$ is thus endowed with the structure of a monoid, and is called the free monoid generated by $\mathcal{A}$.

Let $\mathcal{B}$ and $\mathcal{C}$ be two alphabets. A morphism is a map $\varphi$ from
$\mathcal{B}^\ast$ to $\mathcal{C}^\ast$ that the identity $\varphi(xy)=\varphi(x)\varphi(y)$ for all words $x,y\in\mathcal{B}^\ast$, see \cite{AS2003}.
The Fibonacci sequence $\mathbb{F}$ is the fixed point beginning with $a$ of the Fibonacci morphism $\sigma:\mathcal{A}^\ast\rightarrow \mathcal{A}^\ast$ defined over $\mathcal{A}^\ast$ by $\sigma(a)=ab$ and $\sigma(b)=a$.
$$\mathbb{F}=abaababa abaab abaababa abaababa abaab abaababa abaab abaababa\cdots$$
For the details of the properties of the sequence, see \cite{WW1994}.

Since $\sigma$ is a morphism, $\sigma(ab)=\sigma(a)\sigma(b)$.
The $m$-th iteration of $\sigma$ is $\sigma^m(a)=\sigma^{m-1}(\sigma(a))$ for $m\geq2$ and we denote $F_m=\sigma^m(a)$.
We define $\sigma^0(a)=a$ and $\sigma^{-1}(a)=\sigma^0(b)=b$.
Note that the length of $F_m=\sigma^m(a)$ is the $m$-th Fibonacci number $f_m$,
given by the recursion formulas $f_{-1}=1$, $f_0=1$, $f_{m+1}=f_m+f_{m-1}$ for $m\geq 0$.
Let $\delta_m\in\{a,b\}$ be the last letter of $F_m$. It's easy to see that
$\delta_m=a$ if $m$ is even; $\delta_m=b$ if $m$ is odd.

For a finite word $\omega=x_1x_2\cdots x_n$, the length of $\omega$ is equal to $n$ and denoted by $|\omega|$. We denote by $|\omega|_a$ (resp. $|\omega|_b$) the number of letters of $a$ (resp. $b$) occurring in $\omega$.
The $i$-th conjugation of $\omega$ is the word $C_i(\omega):=x_{i+1}\cdots x_nx_1\cdots x_i$ where $0\leq i\leq n-1$.
The mirror word $\overleftarrow{\omega}$ of $\omega$ is defined to be $\overleftarrow{\omega}=x_n\cdots x_2x_1$. A word $\omega$ is called a palindrome if $\omega=\overleftarrow{\omega}$.
Let $\mathcal{P}$ be all palindromes occurring in $\mathbb{F}$, and $\mathcal{P}(n)$ be all palindromes occurring in $\mathbb{F}$ of length $n$.

Let $\tau=x_1\cdots x_n$ be a finite word (or $\tau=x_1x_2\cdots$ be a sequence).
For any $i\leq j\leq n$, define $\tau[i,j]:=x_ix_{i+1}\cdots x_{j-1}x_j$. That means $\tau[i,j]$ is the factor of $\tau$ of length $j-i+1$, starting from the $i$-th letter and ending to the $j$-th letter. By convention, we denote $\tau[i]:=\tau[i,i]=x_i$ and $\tau[i,i-1]:=\varepsilon$.
We say $\omega=\tau[i,j]$ is a factor of $\tau$, denoted by $\omega\prec\tau$.
When we say $\omega\prec\tau$, it can be that $\omega=\tau$.

We say that $\nu$ is a prefix (resp. suffix) of a word $\omega$, and write $\nu\triangleleft\omega$ (resp. $\nu\triangleright\omega$) if there exists $u\in\mathcal{A}^\ast$ such that $\omega=\nu u$ (resp. $\omega=u\nu$).
In this case, we can write $\nu^{-1}\omega=u$ (resp. $\omega\nu^{-1}=u$).

Let $A$, $B$, $C$ be three sets. We say $A$ is the disjoint unite of $B$ and $C$, if $A=B\cup C$ and $B\cap C=\emptyset$, denoted by $A=B\sqcup C$.

It is well known that the Fibonacci sequence $\mathbb{F}$ is uniformly recurrent, i.e., each factor $\omega$ occurs infinitely often and with bounded gaps between consecutive occurrences \cite{AS2003}.
We arrange them in the sequence
$\{\omega_p\}_{p\ge 1}$, where $\omega_p$ denote the $p$-th occurrence of $\omega$.

The definitions of return words and the sequence of return words below are from F.Durand\cite{D1998}. Let $\omega$ be a factor of $\mathbb{F}$. For $p\geq1$, let $\omega_p=x_{i+1}\cdots x_{i+n}$ and $\omega_{p+1}=x_{j+1}\cdots x_{j+n}$.
The factor $x_{i+1}\cdots x_{j}$ is called the $p$-th return word of $\omega$ and denoted by $r_{p}(\omega)$.
The sequence $\{r_p(\omega)\}_{p\geq1}$ is called the sequences of the return words of factor $\omega$.

The $m$-th singular word is defined as $K_m=\delta_{m+1}F_m\delta_m^{-1}$.
It is known that all singular words are palindromes and $K_m=K_{m-2}K_{m-3}K_{m-2}$ for all $m\geq2$ see \cite{WW1994}.
Let $Ker(\omega)$ be the maximal singular word occurring in factor $\omega$, then by Theorem 1.9 in \cite{HW2015-1},
$Ker(\omega)$ occurs in $\omega$ only once.
Moreover

\begin{property}[Theorem 2.8 in \cite{HW2015-1}]\label{wp}~
$Ker(\omega_p)=Ker(\omega)_p$ for all $\omega\in\mathbb{F}$ and $p\geq1.$
\end{property}
This means, let $Ker(\omega)=K_m$, then the maximal singular word occurring in $\omega_p$ is just $K_{m,p}$. For instance, $Ker(aba)=b$, $(aba)_3=\mathbb{F}[6,8]$, $(b)_3=\mathbb{F}[7]$, so $Ker((aba)_3)=(b)_3$, $(aba)_3=a(b)_3a$.

\begin{property}[Theorem 2.11 in \cite{HW2015-1}]\label{G}~
For any factor $\omega$,
the sequence of return words $\{r_p(\omega)\}_{p\geq1}$ is the Fibonacci sequence over the alphabet $\{r_1(\omega),r_2(\omega)\}$.
\end{property}


\section{The cylinder structure of palindromes}

We have known that all singular words are palindromes, in this section,
we will show that any palindrome can be generated by singular words.

\begin{property}[]\
Let $\omega\in\cal P$, $Ker(\omega)$ occurs in the middle of $\omega$.
\end{property}

\begin{proof}
Since both $\omega$ and  $Ker(\omega)$ are palindromes, $\overleftarrow{\omega}=\omega$ and $\overleftarrow{Ker(\omega)}=Ker(\omega)$. If $Ker(\omega)$ does not occur in the middle of $\omega$,
we can find two occurrences of $Ker(\omega)$ in $\omega$. That leads to a contradiction with
$Ker(\omega)$ occurs in $\omega$ only once.
\end{proof}

By Corollary 2.10 in \cite{HW2015-1} and $K_{m+3}=K_{m+1}K_m K_{m+1}$, any factor $\omega$ with kernel $K_m$ can be expressed uniquely as
$$\omega=K_{m+1}[i,f_{m+1}] K_m K_{m+1}[1,j]=K_{m+3}[i,f_{m+2}+j],$$
where $2\leq i\leq f_{m+1}+1$, $0\leq j\leq f_{m+1}-1$. Thus by Property 3.1 we have

\begin{property}[]\
Any palindrome with kernel $K_m$ can be expressed uniquely as
$$K_{m+1}[i+1,f_{m+1}] K_m K_{m+1}[1,f_{m+1}-i]=K_{m+3}[i+1,f_{m+3}-i],\eqno(1)$$
where $1\leq i\leq f_{m+1}$.
\end{property}

This property above shows that for any $m\ge -1$, all palindromes with kernel $K_m$ can be listed as $\{K_{m+3}[i+1,f_{m+3}-i],1\leq i\leq f_{m+1}\}$, so any palindrome $\omega$ with kernel $K_m$, there exists
uniquely $i$ with $1\leq i\leq f_{m+1}$ so that $\omega=K_{m+3}[i+1,f_{m+3}-i]$.

By the expression (1), we see that the set $\cal P$ decomposes into three disjoint cylinder sets
$\langle a\rangle$, $\langle b\rangle$ and $\langle aa\rangle$, which can be illustrated by the following way, see Tab.1, which we call the cylinder structure of $\cal P$.
This means any palindromes can be generated by singular words.
More concretely, let $\omega$ be a palindrome, then $\omega\in \langle aa\rangle$ if $|\omega|$ is even;
$\omega\in\langle a\rangle$ (resp. $\langle b\rangle$) if $|\omega|$ is odd and the middle letter of $\omega$ is $a$ (resp. $b$).

\begin{remark}
Tab.1 shows the first several elements of cylinder sets $\langle a\rangle$, $\langle b\rangle$ and $\langle aa\rangle$, where we sign all singular words with underlines.
We can see that the singular words are sparse in $\mathcal{P}$, this means the palindromes are generated ``high efficient'' from singular words.
\end{remark}

\vspace{0.2cm}

\centerline{Tab.1: The cylinder structure of palindromes.}
\vspace{-0.2cm}
\begin{center}
\begin{tabular}{ccc}
\hline
cylinder$\langle a\rangle$&cylinder$\langle b\rangle$&cylinder$\langle aa\rangle$\\\hline
\underline{a}&\underline{b}&\underline{aa}\\
\underline{bab}&aba&baab\\
ababa&\underline{aabaa}&abaaba\\
aababaa&baabaab&\underline{babaabab}\\
baababaab&abaabaaba&ababaababa\\
abaababaaba&babaabaabab&aababaababaa\\
\underline{aabaababaabaa}&ababaabaababa&baababaababaab\\
baabaababaabaab&aababaabaababaa&abaababaababaaba\\
abaabaababaabaaba&baababaabaababaab&aabaababaababaabaa\\
babaabaababaabaabab&abaababaabaababaaba&baabaababaababaabaab\\
ababaabaababaabaababa&\underline{babaababaabaababaabab}&abaabaababaababaabaaba\\
aababaabaababaabaababaa&ababaababaabaababaababa&babaabaababaababaabaabab\\
\vdots&\vdots&\vdots\\\hline
\end{tabular}
\end{center}

By the cylinder structure of palindromes, we can recover some results immediately.

\begin{corollary}[Droubay\cite{D1995}]~ For any $n\geq1$, $\mathcal{P}(n)=2$ if $n$ is odd; $\mathcal{P}(n)=1$ if $n$ is even.
\end{corollary}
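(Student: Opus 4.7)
The plan is to reduce the count to a short Fibonacci / modular-arithmetic exercise using the cylinder description from Property 3.2. First, since $Ker(\omega)$ occurs in $\omega$ exactly once, the map sending $\omega\in\mathcal{P}$ to the pair $(m,i)$ with $Ker(\omega)=K_m$ and $i$ the unique index of Property 3.2 is a bijection from $\mathcal{P}$ onto $\{(m,i):m\geq -1,\ 1\leq i\leq f_{m+1}\}$, and under this bijection $|\omega|=f_{m+3}-2i$. Hence
\[
|\mathcal{P}(n)| = \#\{(m,i) : m \geq -1,\ 1\leq i\leq f_{m+1},\ f_{m+3}-2i=n\}.
\]

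Next I would reduce this to a condition on $m$ alone. For each $m$ there is at most one such $i$, and one exists iff $f_{m+3}-n$ is a positive even integer at most $2f_{m+1}$. Using the identity $f_{m+3}=2f_{m+1}+f_m$, this is equivalent to
\[
f_m \leq n \leq f_{m+3}-2 \quad\text{and}\quad n\equiv f_m \pmod 2.
\]
Combined with the standard fact that $f_m$ is even iff $m\equiv 1 \pmod 3$ (immediate from the recurrence mod $2$), the task becomes counting the $m$ in the above range with $m\not\equiv 1\pmod 3$ when $n$ is odd, and with $m\equiv 1\pmod 3$ when $n$ is even.

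The third step is to pin down the valid $m$ explicitly. Let $m_0$ be the unique integer with $f_{m_0}\leq n<f_{m_0+1}$ (so $m_0\geq 0$). Direct inspection of $f_m \leq n\leq f_{m+3}-2$ shows that $m=m_0$ and $m=m_0-1$ are always valid, $m=m_0-2$ is valid iff $n\leq f_{m_0+1}-2$, and no other $m\geq -1$ works. So the valid set is either $\{m_0-2,\,m_0-1,\,m_0\}$ (Case A: $n\leq f_{m_0+1}-2$) or $\{m_0-1,\,m_0\}$ (Case B: $n=f_{m_0+1}-1$).

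A short parity count finishes things. In Case A, any three consecutive integers contain exactly one element $\equiv 1\pmod 3$, so the count is $2$ for odd $n$ and $1$ for even $n$, as required. Case B is the one mildly delicate piece and the main obstacle: with only two consecutive integers available, one must verify that the parity of $n=f_{m_0+1}-1$ lines up with the residues of $\{m_0-1,m_0\}$ mod $3$. Using the same ``$f_k$ even iff $k\equiv 1\pmod 3$'' to determine the parity of $n$ from $m_0\pmod 3$, and then inspecting each of the three possibilities $m_0\pmod 3\in\{0,1,2\}$, yields the predicted count in every subcase. The small boundary cases $m_0\in\{0,1\}$, where $m_0-2$ would lie outside $\{m\geq -1\}$, fall into Case B automatically and need no separate treatment.
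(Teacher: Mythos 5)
Your proposal is correct and rests on exactly the same foundation as the paper: Property 3.2's parametrization of palindromes by pairs $(m,i)$ with $|\omega|=f_{m+3}-2i$, which is precisely the cylinder structure from which the paper declares the corollary "immediate." You have simply written out in full the count the paper leaves implicit (fixing $n$ and enumerating the valid $m$, rather than observing that each of the three cylinders contains exactly one palindrome of every length of a fixed parity), and all your case checks, including the boundary case $n=f_{m_0+1}-1$, are accurate.
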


\begin{corollary}[Chuan\cite{C1993} and Droubay\cite{D1995}]~ For any $m\geq-1$,
\begin{equation*}
\mathcal{P}(f_m)\cap\{C_i(F_m),0\leq i\leq f_{m}\}=
\begin{cases}
0,&m\equiv1~(mod~3)\\
1,&otherwise
\end{cases}
\end{equation*}
\end{corollary}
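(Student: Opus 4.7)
The plan is to combine Corollary 3.4 with an analysis of which palindromes of length $f_m$ can be conjugates of $F_m$. First, a short induction on the Fibonacci recurrence modulo $2$ gives that $f_m$ is even precisely when $m \equiv 1 \pmod{3}$, so Corollary 3.4 yields $|\mathcal{P}(f_m)| = 1$ when $m \equiv 1 \pmod{3}$ and $|\mathcal{P}(f_m)| = 2$ otherwise. This already reduces the problem to deciding, among the one or two palindromes of length $f_m$, how many are conjugates of $F_m$.

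Next I would observe that the singular word $K_m$ is always a palindrome of length $f_m$, but is never a conjugate of $F_m$: the identity $K_m = \delta_{m+1} F_m \delta_m^{-1}$ together with $\{\delta_m,\delta_{m+1}\} = \{a,b\}$ yields $|K_m|_a = |F_m|_a \pm 1 = f_{m-1} \pm 1$, so $K_m$ and $F_m$ have different letter counts. This immediately settles the case $m \equiv 1 \pmod{3}$: the unique palindrome of length $f_m$ is $K_m$, which is not a conjugate, so the intersection has cardinality $0$.

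For $m \not\equiv 1 \pmod{3}$ there are two palindromes of length $f_m$, namely $K_m$ and one other which I call $P_m$, so we need to show $P_m$ is a conjugate of $F_m$ (giving cardinality exactly $1$). Using Property 3.2 and solving $f_{k+3}-2i = f_m$ under $1 \leq i \leq f_{k+1}$, one checks that $P_m$ arises as the centered sub-palindrome of length $f_m$ inside $K_{m+2}$ (when $m \equiv 0 \pmod{3}$, so $f_{m+1}$ is even) or inside $K_{m+1}$ (when $m \equiv 2 \pmod{3}$, so $f_{m-1}$ is even).

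The main obstacle is verifying that this $P_m$ is genuinely a rotation of $F_m$. The cleanest route I see is via subword complexity: the standard fact that $\mathbb{F}$ has $p_{\mathbb{F}}(n) = n+1$ distinct factors of length $n$, combined with the observation that $F_m F_m$ occurs in $\mathbb{F}$ (so all $f_m$ conjugates of the primitive word $F_m$ appear as factors) and the letter-count argument above (which forces $K_m$ to be the one remaining factor), shows that every factor of $\mathbb{F}$ of length $f_m$ is either a conjugate of $F_m$ or equals $K_m$. Since $P_m$ occurs in $\mathbb{F}$ by definition of $\mathcal{P}$ and satisfies $P_m \neq K_m$, it must be a conjugate of $F_m$, completing the proof.
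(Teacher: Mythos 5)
Your proof is correct and follows essentially the same route as the paper: both arguments combine Corollary 3.4 (counting palindromes of length $f_m$ by the parity of $f_m$, which is even iff $m\equiv 1\pmod 3$) with the fact that the factors of $\mathbb{F}$ of length $f_m$ are exactly $K_m$ together with the conjugates of $F_m$, so that the non-$K_m$ palindrome, when it exists, must be a conjugate. The only difference is that the paper simply cites \cite{WW1994} for this classification of length-$f_m$ factors, whereas you re-derive it from the complexity $p_{\mathbb{F}}(n)=n+1$, the occurrence of $F_mF_m$, the primitivity of $F_m$, and the letter-count argument showing $K_m$ is not a conjugate.
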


\begin{proof}
Notice that all factors with length $f_m$ are $\{K_m\}\sqcup\{C_i(F_m),0\leq i\leq f_{m}\}$ see \cite{WW1994}.
When $m\equiv 1$ (mod 3), $f_m$ is even, there is only one palindrome with length $f_m$, i.e. $K_m$, so there is no palindrome as a conjugation of $F_m$. When $m\equiv0,2$ (mod 3), $f_m$ is odd, there are exactly 2 palindromes with length $f_m$. One of them is $K_m$, the other one is a conjugation of $F_m$.
Thus the conclusion holds.
\end{proof}


\section{The chain structure of palindromes}

Recall that $\omega_p$ be the p-th occurrence of the factor $\omega$ in $\mathbb F$. We denote by $L(\omega,p)$ the position of the first letter of $\omega_p$, then the position of the last letter of $\omega_p$ is $P(\omega,p):=L(\omega,p)+|\omega|-1$.

Denote $\phi=\frac{\sqrt{5}-1}{2}$, $\lfloor\alpha\rfloor$ is the biggest integer not larger than $\alpha$.

\begin{property}\label{P}
For $m\geq-1$, $p\geq1$,
$P(K_m,p)=pf_{m+1}+(\lfloor\phi p\rfloor+1)f_{m}-1.$
\end{property}

\begin{proof}
By Proposition 3.4 in \cite{HW2015-1},
$L(K_m,p)=pf_{m+1}+|\mathbb{F}[1,p-1]|_af_{m}$. Since
$|\mathbb{F}[1,p-1]|_a=\lfloor\phi p\rfloor$
and $|K_m|=f_m$, we have $P(K_m,p)=L(K_m,p)+|K_m|-1=pf_{m+1}+(\lfloor\phi p\rfloor+1)f_{m}-1$.
\end{proof}


\begin{corollary}\label{P1}
$P(a,p)=p+\lfloor\phi p\rfloor$,
$P(b,p)=2p+\lfloor\phi p\rfloor$
and $P(aa,p)=3p+2\lfloor\phi p\rfloor+1$.
\end{corollary}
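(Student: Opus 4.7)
The plan is simply to observe that $a$, $b$, and $aa$ are exactly the three smallest singular words $K_{-1}$, $K_0$, $K_1$, and then instantiate Property \ref{P} at $m=-1,0,1$.

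First I would compute $K_{-1}$, $K_0$, $K_1$ directly from the definition $K_m=\delta_{m+1}F_m\delta_m^{-1}$. Using the conventions fixed in the Preliminaries, $F_{-1}=\sigma^{-1}(a)=b$, $F_0=a$, $F_1=ab$, $F_2=aba$, so the last-letter values are $\delta_{-1}=b$, $\delta_0=a$, $\delta_1=b$, $\delta_2=a$. Therefore $K_{-1}=a\cdot b\cdot b^{-1}=a$, $K_0=b\cdot a\cdot a^{-1}=b$, and $K_1=a\cdot ab\cdot b^{-1}=aa$.

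Next I would substitute each of $m=-1,0,1$ into the identity $P(K_m,p)=pf_{m+1}+(\lfloor\phi p\rfloor+1)f_m-1$ supplied by Property \ref{P}, using $f_{-1}=f_0=1$, $f_1=2$, $f_2=3$. The three cases yield, after trivial arithmetic, $P(a,p)=p+\lfloor\phi p\rfloor$, $P(b,p)=2p+\lfloor\phi p\rfloor$, and $P(aa,p)=3p+2\lfloor\phi p\rfloor+1$, which are exactly the three claimed formulas.

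There is no real obstacle here: the corollary is a direct specialization of Property \ref{P} to the three smallest admissible values of $m$. The only care required is the bookkeeping of the boundary conventions $f_{-1}=f_0=1$ and $\sigma^{-1}(a)=b$, which the Preliminaries have already pinned down; once those are in hand, the computation is a one-line substitution for each of the three cases.
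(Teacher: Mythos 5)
Your proposal is correct and matches the paper's intent exactly: the corollary is stated as an immediate consequence of Property \ref{P}, obtained by recognizing $a=K_{-1}$, $b=K_0$, $aa=K_1$ and substituting $m=-1,0,1$ with $f_{-1}=f_0=1$, $f_1=2$, $f_2=3$. All of your identifications and arithmetic check out.
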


Let $\omega$ be a palindrome with kernel $K_m$ satisfying expression (1), $1\leq i\leq f_{m+1}$.
By Property \ref{wp}, we get the relation between $P(\omega,p)$ and $P(K_m,p)$. Using Property \ref{P},
we have
$$P(\omega,p)=P(K_m,p)+f_{m+1}-i=pf_{m+1}+(\lfloor\phi p\rfloor+1)f_{m}-1+f_{m+1}-i.\eqno(2)$$

\begin{remark} As a simple application of the expression (2), we give a necessary and sufficient condition
such that the prefix $\mathbb{F}[1,n]$ is a palindrome which was proved by J.Shallit et al \cite{DMSS2014}. In fact,
$$\{n:\mathbb{F}[1,n]\in\mathcal{P}\}=\{n:\omega\in\mathcal{P},|\omega|=n,L(\omega,1)=1\}
=\{n:\omega\in\mathcal{P},|\omega|=P(\omega,1)=n\}.$$
Since $P(\omega,1)=2f_{m+1}+f_m-i-1$ and $|\omega|=f_{m+3}-2i$, we have $|\omega|=P(\omega,1)$ iff $i=f_{m+3}-2f_{m+1}-f_m+1=1\in\{1,\cdots,f_{m+1}\}.$
This means $\mathbb{F}[1,n]$ is a palindrome iff $n=f_{m+3}-2$ for $m\geq-1$,
i.e. $n=f_m-2$ for $m\geq2$, which is Theorem 14 in \cite{DMSS2014}.
\end{remark}

Define the set
$\langle K_m,p\rangle:=\{P(\omega,p):\omega\in\mathcal{P},Ker(\omega)=K_m\},$
which is the finite subset of $\mathbb{N}$. 
By the expression (2), we have

\begin{property}\label{K}
For $m\geq-1$, $p\geq 1$,
$$\langle K_m,p\rangle=\{pf_{m+1}+(\lfloor\phi p\rfloor+1) f_{m}-1,\cdots,(p+1)f_{m+1}+(\lfloor\phi p\rfloor+1)f_{m}-2\}.\eqno(3)$$
\end{property}

An immediately corollary is $|\langle K_m,p\rangle|=|\{1\leq i\leq f_{m+1}\}|=f_{m+1}$.

Especially,
$\langle K_m,1\rangle=\{f_{m+2}-1,\cdots,f_{m+3}-2\}.$
Thus for $m\geq -1$, two integer sets $\langle K_m,1\rangle$ and $\langle K_{m+1},1\rangle$ are consecutive.
Therefore we get a chain $\{\langle K_m,1\rangle\}_{m\geq-1}$ satisfying $\bigsqcup_{m=-1}^\infty\langle K_m,1\rangle=\mathbb{N}$,
see Tab.2 below which we call the chain structure of $\cal P$.


\vspace{0.2cm}

\centerline{Tab.2: The chain structure of palindromes.}
\vspace{-0.2cm}
\small
\begin{center}
\begin{tabular}{l|l|l|l|l|l|l|l|l}%
\hline
$\langle a,1\rangle$&$\langle b,1\rangle$&$\langle aa,1\rangle$&$\langle K_2,1\rangle$&$\langle K_3,1\rangle$&$\langle K_4,1\rangle$&$\langle K_5,1\rangle$&$\langle K_6,1\rangle$&$\ldots$\\
$\{1\}$&$\{2,3\}$&$\{4,5,6\}$&$\{7,\cdots,11\}$&$\{12,\cdots,19\}$
&$\{20,\cdots,32\}$&$\{33,\cdots,53\}$&$\{54,\cdots,87\}$&\\\hline
\end{tabular}
\end{center}
\normalsize

\begin{theorem}
For $n\geq1$, the number of distinct palindrome occurrences in $\mathbb{F}[1,n]$ is $n$.
\end{theorem}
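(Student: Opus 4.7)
The plan is to count distinct palindromic factors of $\mathbb{F}[1,n]$ by parametrizing each palindrome by the position at which its \emph{first} occurrence in $\mathbb{F}$ ends, and then invoking the chain structure (Property~\ref{K}) to see that this parametrization realizes a bijection between $\mathcal{P}$ and $\mathbb{N}_{\ge 1}$.

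The first step is the easy observation: a palindrome $\omega$ is a factor of $\mathbb{F}[1,n]$ if and only if $P(\omega,1)\leq n$. Indeed, the first occurrence starts earliest among all occurrences, so if any copy of $\omega$ lies in $\mathbb{F}[1,n]$ then so does $\omega_1$; conversely $\omega_1\prec\mathbb{F}[1,n]$ trivially exhibits $\omega$ as a factor. Hence the count we want equals $\#\{\omega\in\mathcal{P}:P(\omega,1)\leq n\}$.

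The second step is to show that the assignment $\omega\mapsto P(\omega,1)$ is a bijection from $\mathcal{P}$ onto $\mathbb{N}_{\ge 1}$. Every palindrome has a unique kernel $K_m$ for some $m\ge -1$, and by Property~3.2 such a palindrome is determined by the index $i\in\{1,\dots,f_{m+1}\}$ appearing in~(1). Formula~(2) with $p=1$ gives $P(\omega,1)=f_{m+3}-1-i$, so within each kernel class the map $i\mapsto P(\omega,1)$ is a bijection onto $\langle K_m,1\rangle=\{f_{m+2}-1,\dots,f_{m+3}-2\}$. The chain structure $\bigsqcup_{m\ge -1}\langle K_m,1\rangle=\mathbb{N}_{\ge 1}$ (Property~\ref{K} for $p=1$, together with the observation following it that $\langle K_m,1\rangle$ and $\langle K_{m+1},1\rangle$ are consecutive blocks) then promotes these piecewise bijections into a single bijection $\mathcal{P}\to\mathbb{N}_{\ge 1}$.

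Combining the two steps, $\#\{\omega\in\mathcal{P}:P(\omega,1)\leq n\}=\#\{k\in\mathbb{N}_{\ge 1}:k\leq n\}=n$, which is the desired count. The only subtle point—hardly an obstacle given the chain structure is already in hand—is verifying that ``first occurrence'' is the right notion: one must make sure that $P(\omega,1)$ really is the minimum of $\{P(\omega,p)\}_{p\ge 1}$, which is immediate from the definition of the enumeration $\{\omega_p\}_{p\ge 1}$ by increasing starting position.
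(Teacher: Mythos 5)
Your proposal is correct and follows essentially the same route as the paper: both arguments rest on the chain structure $\bigsqcup_{m\ge-1}\langle K_m,1\rangle=\mathbb{N}$ making $\omega\mapsto P(\omega,1)$ a bijection from $\mathcal{P}$ onto $\mathbb{N}_{\ge1}$, the paper phrasing this as ``exactly one new palindrome ends at each position $n$'' while you phrase it as counting $\{\omega:P(\omega,1)\le n\}$. The two formulations are immediately equivalent, so no further comparison is needed.
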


\begin{proof} By the chain structure of palindromes, for $n\geq1$
\begin{equation*}
\begin{split}
&\bigsqcup_{m=-1}^\infty\langle K_m,1\rangle=\mathbb{N}\Rightarrow
|\{\omega:\omega\in\mathcal{P},P(\omega,1)=n\}|=1\Rightarrow
|\{\omega:\omega\in\mathcal{P},\omega_1\triangleright\mathbb{F}[1,n]\}|=1\\
\Rightarrow&|\{\omega:\omega\in\mathcal{P},\omega\prec\mathbb{F}[1,n],
\omega\not\!\prec\mathbb{F}[1,n-1]\}|=1.
\end{split}
\end{equation*}
This is equivalent to our theorem.
\end{proof}

\section{The recursive structure of palindromes}

In this section, we establish a recursive structure of palindromes. Using it, we will count
the number of repeated palindrome occurrences in $\mathbb{F}[1,n]$ in Section 6.

\begin{lemma}\label{L1}
$\lfloor\phi (p+\lfloor\phi p\rfloor)\rfloor=p-1$.
\end{lemma}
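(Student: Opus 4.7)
The plan is to exploit the defining algebraic identity $\phi^2+\phi=1$ (equivalently $\phi(1+\phi)=1$) to rewrite $\phi(p+\lfloor\phi p\rfloor)$ in a form whose floor is transparent.

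First I would split $\lfloor\phi p\rfloor=\phi p-\{\phi p\}$, where $\{\cdot\}$ denotes the fractional part. Substituting, I compute
$$\phi(p+\lfloor\phi p\rfloor)=\phi p+\phi^2 p-\phi\{\phi p\}=(\phi+\phi^2)p-\phi\{\phi p\}=p-\phi\{\phi p\},$$
using $\phi+\phi^2=1$. So the whole identity collapses to showing $\lfloor p-\phi\{\phi p\}\rfloor=p-1$.

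The remaining step is a simple estimate on the correction term. Since $p\geq 1$ is an integer and $\phi$ is irrational, $\phi p\notin\mathbb{Z}$, so $\{\phi p\}\in(0,1)$; combined with $\phi\in(0,1)$ this gives $\phi\{\phi p\}\in(0,\phi)\subset(0,1)$. Therefore $p-\phi\{\phi p\}\in(p-1,p)$, and its floor is exactly $p-1$.

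I do not expect any real obstacle: the entire argument is essentially one line once the algebraic identity $\phi(1+\phi)=1$ is invoked. The only point that deserves a brief mention is irrationality, which rules out the degenerate case $\{\phi p\}=0$ (needed to get the strict inequality $p-\phi\{\phi p\}>p-1$).
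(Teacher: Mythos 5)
Your proof is correct, but it takes a genuinely different route from the paper. You argue purely arithmetically: writing $\lfloor\phi p\rfloor=\phi p-\{\phi p\}$ and using $\phi+\phi^2=1$, you collapse $\phi(p+\lfloor\phi p\rfloor)$ to $p-\phi\{\phi p\}$, and then the irrationality of $\phi p$ pins the quantity strictly inside $(p-1,p)$. The paper instead proves the identity combinatorially: it computes the position $P(aba,p)$ in two ways --- once via the kernel relation $(aba)_p=a(b)_pa$, giving $P(aba,p)=2p+\lfloor\phi p\rfloor+1$, and once via the return-word property, giving $P(aba,p)=P(a,P(a,p))+2=p+\lfloor\phi p\rfloor+\lfloor\phi(p+\lfloor\phi p\rfloor)\rfloor+2$ --- and equates the two. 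Your argument is shorter, self-contained, and makes clear that the identity is a fact about the golden ratio valid for every positive integer $p$, independent of any Fibonacci-word machinery; the same substitution handles the companion identities in Remark 5.2 and Lemmas 5.3--5.4 uniformly. The paper's derivation, while less elementary, is in the spirit of the article: it showcases how the return-word and kernel structure encodes these Beatty-type identities, which is part of the point being advertised. Both proofs are complete; the only step you rightly flag as needing care, $\{\phi p\}\neq 0$, is indeed guaranteed by irrationality.
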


\begin{proof}
Notice that $Ker(aba)=b$, by Property \ref{wp}, we have $(aba)_p=a(b)_pa$. So
$$P(aba,p)=P(b,p)+1=2p+\lfloor\phi p\rfloor+1.$$

On the other hand, by Property \ref{G}, the $p$-th occurrence of $aba=r_1(a)a$ is equal to the $L(a,p)$-th occurrence of $a$. This means $L(aba,p)=L(a,L(a,p))$, i.e.,
$$P(aba,p)=P(a,P(a,p))+2=P(a,p)+\lfloor\phi P(a,p)\rfloor+2=p+\lfloor\phi p\rfloor+\lfloor\phi (p+\lfloor\phi p\rfloor)\rfloor+2.$$

Compare the two expressions of $P(aba,p)$, we have $p-1=\lfloor\phi (p+\lfloor\phi p\rfloor)\rfloor$.
\end{proof}

\begin{remark}\label{L2}
By an analogous argument, we can get interesting identity:
$p+\lfloor\phi p\rfloor=\lfloor\phi (2p+\lfloor\phi p\rfloor)\rfloor.$
\end{remark}

\begin{lemma}\label{L3} $\lfloor\phi(p+\lfloor\phi p\rfloor+1)\rfloor=p$.\end{lemma}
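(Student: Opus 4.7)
My strategy is to reinterpret the identity as a counting statement about the prefix $\mathbb{F}[1,n]$ with $n=p+\lfloor\phi p\rfloor$, and then apply the $a$-counting formula that already appears in the proof of Property~\ref{P}. This keeps the argument close in spirit to Lemma~\ref{L1}, where an identity of this shape is derived by computing a single quantity in two different ways.

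The first step is to identify the argument of the outer floor. By Corollary~\ref{P1}, $P(a,p)=p+\lfloor\phi p\rfloor$, so the lemma is equivalent to the cleaner assertion
$\lfloor\phi(P(a,p)+1)\rfloor=p.$
The second step is to invoke the identity $|\mathbb{F}[1,q-1]|_a=\lfloor\phi q\rfloor$ (Proposition~3.4 of \cite{HW2015-1}, already used inside the proof of Property~\ref{P}). Taking $q=P(a,p)+1$ gives
$|\mathbb{F}[1,P(a,p)]|_a=\lfloor\phi(P(a,p)+1)\rfloor.$
The left-hand side is exactly $p$ by the very definition of $P(a,p)$ as the position of the $p$-th letter $a$ in $\mathbb{F}$, and the identity follows at once.

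\textbf{Main obstacle.} There really is no substantive obstacle: the entire proof collapses once one observes that $p+\lfloor\phi p\rfloor$ is not just a numerical combination but the location of the $p$-th $a$. The only point requiring a moment of care is confirming that the counting formula $|\mathbb{F}[1,q-1]|_a=\lfloor\phi q\rfloor$ applies at $q=P(a,p)+1$, which is immediate since $P(a,p)\ge 1$ for all $p\ge 1$. As a sanity check (and an alternative proof if one prefers to stay purely arithmetic), one can write $\phi p=\lfloor\phi p\rfloor+\theta$ with $\theta\in(0,1)$ and use $\phi^2=1-\phi$ to reduce $\phi(p+\lfloor\phi p\rfloor+1)$ to $p+\phi(1-\theta)$; since $\phi(1-\theta)\in(0,\phi)\subset(0,1)$, the floor is $p$. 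The combinatorial version is preferable because it mirrors the approach of Lemma~\ref{L1} and makes the meaning of the identity transparent.
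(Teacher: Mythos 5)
Your proof is correct, and it takes a genuinely different (and arguably cleaner) route than the paper. The paper proves the two inequalities $p\leq\phi(p+\lfloor\phi p\rfloor+1)<p+1$ separately: the upper bound is deduced from Lemma~\ref{L1}, and the lower bound is obtained by importing the additional fact $L(a,p)=\lfloor p/\phi\rfloor$ and combining it with $L(a,p)=P(a,p)=p+\lfloor\phi p\rfloor$. You instead recognize $p+\lfloor\phi p\rfloor$ as $P(a,p)$, the position of the $p$-th letter $a$, and evaluate $\lfloor\phi(P(a,p)+1)\rfloor$ directly via the counting identity $|\mathbb{F}[1,q-1]|_a=\lfloor\phi q\rfloor$ at $q=P(a,p)+1$; the left-hand side is $p$ by definition of $P(a,p)$, so both bounds come for free in one stroke. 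Everything you use is already established in the paper before this lemma (Corollary~\ref{P1} and the counting formula quoted in the proof of Property~\ref{P}), so there is no circularity, and you avoid the unproved auxiliary formula $L(a,p)=\lfloor p/\phi\rfloor$ that the paper relies on. Your purely arithmetic fallback, using $\phi^2=1-\phi$ to reduce the argument to $p+\phi(1-\theta)$ with $\theta\in(0,1)$ the fractional part of $\phi p$ (nonzero by irrationality of $\phi$), is also valid and is in fact the most elementary self-contained proof of the identity. The trade-off is that the paper's proof reuses Lemma~\ref{L1} and stays within its chain of $P(\cdot,\cdot)$ computations, whereas yours makes the combinatorial meaning of the identity transparent.
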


\begin{proof}
We only need to prove $p\leq\phi(p+\lfloor\phi p\rfloor+1)<p+1$.

By Lemma \ref{L1}, $\lfloor\phi (p+\lfloor\phi p\rfloor)\rfloor=p-1$. So
$\phi(p+\lfloor\phi p\rfloor)<p$, $\phi(p+\lfloor\phi p\rfloor+1)<p+\phi<p+1$.

On the other hand, by Corollary \ref{P1}, $P(a,p)=p+\lfloor\phi p\rfloor$,
A known result is $L(a,p)=\lfloor p/\phi\rfloor$.
Since $L(a,p)=P(a,p)$, $\lfloor p/\phi\rfloor=p+\lfloor\phi p\rfloor$.
So $\phi(\lfloor p/\phi\rfloor+1)=\phi(p+\lfloor\phi p\rfloor+1)$.
By the definition of function $\lfloor\cdot\rfloor$, $\lfloor p/\phi\rfloor+1\geq p/\phi$, i.e. $\phi(\lfloor p/\phi\rfloor+1)\geq p$.
Thus $\phi(p+\lfloor\phi p\rfloor+1)\geq p$.
\end{proof}

\begin{lemma}\label{L4}
$\lfloor\phi(2p+\lfloor\phi p\rfloor+1)\rfloor=p+\lfloor\phi p\rfloor$.
\end{lemma}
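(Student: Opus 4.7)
The plan is to sandwich $\phi(2p+\lfloor\phi p\rfloor+1)$ strictly between the consecutive integers $p+\lfloor\phi p\rfloor$ and $p+\lfloor\phi p\rfloor+1$. The identity then follows at once from the definition of $\lfloor\cdot\rfloor$.

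The main computation uses only the golden-ratio relation $\phi^{2}=1-\phi$. Writing $\lfloor\phi p\rfloor=\phi p-\{\phi p\}$, where $\{\cdot\}$ denotes the fractional part, expansion gives
\[
\phi\bigl(2p+\lfloor\phi p\rfloor\bigr)=2\phi p+\phi^{2}p-\phi\{\phi p\}=(1+\phi)p-\phi\{\phi p\},
\]
while $p+\lfloor\phi p\rfloor=(1+\phi)p-\{\phi p\}$. Subtracting, the difference equals $(1-\phi)\{\phi p\}$, which sharpens Remark 5.2 into the two-sided estimate
\[
p+\lfloor\phi p\rfloor<\phi\bigl(2p+\lfloor\phi p\rfloor\bigr)<p+\lfloor\phi p\rfloor+(1-\phi),
\]
where the lower inequality is strict because $\phi$ is irrational and $p\geq 1$ forces $\{\phi p\}>0$, and the upper one is strict because $\{\phi p\}<1$.

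Adding $\phi$ to every term and using $(1-\phi)+\phi=1$ yields
\[
p+\lfloor\phi p\rfloor<\phi\bigl(2p+\lfloor\phi p\rfloor+1\bigr)<p+\lfloor\phi p\rfloor+1,
\]
so the floor equals $p+\lfloor\phi p\rfloor$, as required. An alternative route more in the spirit of Lemmas 5.1 and 5.3 would read $2p+\lfloor\phi p\rfloor=P(b,p)$ and $p+\lfloor\phi p\rfloor=P(a,p)$ via Corollary 4.2 and then reinterpret the identity as a statement about successive letter positions in $\mathbb{F}$; but the direct calculation above is shorter and self-contained. No serious obstacle is anticipated beyond bookkeeping, and the only delicate point is tracking the strictness of the inequalities, which rests on $0<\{\phi p\}<1$.
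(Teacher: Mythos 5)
Your proof is correct, but it takes a genuinely different route from the paper's. The paper stays inside its combinatorial framework: by Property 2.2 the $p$-th occurrence of $aa=r_2(a)a$ coincides with the $L(b,p)$-th occurrence of $a$, hence $P(aa,p)=P(a,P(b,p)+1)$; substituting the closed forms of Corollary 4.2 into both sides of this identity and cancelling yields the floor identity. Your argument is instead a direct arithmetic verification: using $\phi^2=1-\phi$ and $0<\{\phi p\}<1$ (irrationality of $\phi$) you compute
$\phi(2p+\lfloor\phi p\rfloor)-(p+\lfloor\phi p\rfloor)=(1-\phi)\{\phi p\}\in(0,1-\phi)$,
and adding $\phi$ to all three members closes the window to exactly $1$, so the floor is forced. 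This checks out. What your approach buys is self-containedness --- it needs neither the return-word machinery nor the position formulas, and as a by-product your two-sided estimate also re-proves Remark 5.2. What the paper's approach buys is uniformity: Lemmas 5.1 and 5.4 are proved by the same device (two ways of locating one occurrence), which is the conceptual point of the section. One small wording slip: after ``adding $\phi$ to every term'' the lower bound is really $p+\lfloor\phi p\rfloor+\phi$, which you silently weaken to $p+\lfloor\phi p\rfloor$; that is harmless since $\phi>0$, but worth stating.
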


\begin{proof} By Property \ref{G}, the $p$-th occurrence of $aa=r_2(a)a$ is equal to the $L(b,p)$-th occurrence of
$a$. An immediate corollary is: the $(L(b,p)+1)$-th occurrence of
$a$ is at position $L(aa,p)+1=P(aa,p)$.
This means $P(aa,p)=P(a,P(b,p)+1)=P(a,2p+\lfloor\phi p\rfloor+1)$. By Corollary \ref{P1},
$$3p+2\lfloor\phi p\rfloor+1=2p+\lfloor\phi p\rfloor+1+\lfloor\phi(2p+\lfloor\phi p\rfloor+1)\rfloor.$$
So $p+\lfloor\phi p\rfloor=\lfloor\phi(2p+\lfloor\phi p\rfloor+1)\rfloor$.
\end{proof}

\begin{property}\label{P4.1}
$\langle K_m,p\rangle=\langle K_{m-2},P(b,p)+1\rangle\sqcup\langle K_{m-1},P(a,p)+1\rangle$ for $m\geq1$.
\end{property}

\begin{proof} By Lemma \ref{L4}, $\lfloor\phi(2p+\lfloor\phi p\rfloor+1)\rfloor=p+\lfloor\phi p\rfloor$. By Property \ref{P},
\begin{equation*}
\begin{split}
&\min\langle K_{m-2},P(b,p)+1\rangle=\min\langle K_{m-2},2p+\lfloor\phi p\rfloor+1\rangle\\
=&(2p+\lfloor\phi p\rfloor+1)f_{m-1}+(\lfloor\phi(2p+\lfloor\phi p\rfloor+1)\rfloor+1) f_{m-2}-1\\
=&(2p+\lfloor\phi p\rfloor+1)f_{m-1}+(p+\lfloor\phi p\rfloor+1) f_{m-2}-1
=pf_{m+1}+(\lfloor\phi p\rfloor+1)f_{m}-1.
\end{split}
\end{equation*}
Since $\min\langle K_m,p\rangle=pf_{m+1}+(\lfloor\phi p\rfloor+1)f_{m}-1$, $\min\langle K_{m-2},P(b,p)+1\rangle=\min\langle K_m,p\rangle$.

By Lemma \ref{L3} and Property \ref{P}, we have 
$\max\langle K_{m-1},P(a,p)+1\rangle=\max\langle K_m,p\rangle$.

Similarly, $\max\langle K_{m-2},P(b,p)+1\rangle+1=\min\langle K_{m-1},P(a,p)+1\rangle$. So the conclusion holds.
\end{proof}

For instance, taking $m=4$ and $p=1$, we have $P(b,p)+1=3$ and $P(a,p)+1=2$. Thus $\langle K_4,1\rangle=\{20,\cdots,32\}$ is the disjoint unite of $\langle K_2,3\rangle=\{20,\cdots,24\}$ and $\langle K_3,2\rangle=\{25,\cdots,32\}$.

\begin{property}[]\label{P4.2}
$\max\langle b,p\rangle=\langle a,P(a,p)+1\rangle$  for $p\geq1$.
\end{property}

\begin{proof} By Corollary \ref{P1}, $P(a,p)=p+\lfloor\phi p\rfloor$. By Lemma \ref{L3}, $\lfloor\phi(p+\lfloor\phi p\rfloor+1)\rfloor=p$. So
$$\langle a,P(a,p)+1\rangle
=\langle a,p+\lfloor\phi p\rfloor+1\rangle
=p+\lfloor\phi p\rfloor+1+\lfloor\phi(p+\lfloor\phi p\rfloor+1)\rfloor
=p+\lfloor\phi p\rfloor+1+p.$$
By expression (3), $\max\langle b,p\rangle
=(p+1)f_{1}+(\lfloor\phi p\rfloor+1)f_{0}-2=2p+\lfloor\phi p\rfloor+1=\langle a,P(a,p)+1\rangle$.
\end{proof}

From properties \ref{P4.1} and \ref{P4.2}, we can establish the following recursive relations for any $\langle K_m,p\rangle$, which we call the recursive structure of $\cal P$.
\begin{equation*}
\begin{cases}
\tau_1\langle K_m,p\rangle:=\langle K_{m-2},P(b,p)+1\rangle\sqcup\langle K_{m-1},P(a,p)+1\rangle&\text{for }m\geq1;\\
\tau_2\langle K_0,p\rangle=\tau_2\langle b,p\rangle:=\langle K_{-1},P(a,p)+1\rangle.
\end{cases}
\end{equation*}

On the other hand, (a) For any $m\geq-1$, each $\langle K_{m},1\rangle$ belongs to the recursive structure.
(b) Since $\mathbb{F}$ over alphabet $\{a,b\}$, $\mathbb{N}=\{1\}\sqcup\{P(a,\hat{p})+1\}\sqcup\{P(b,\hat{p})+1\}$. So for any $p\geq2$, there exists
$\hat{p}$ such that $P(a,\hat{p})+1=p$ or $P(b,\hat{p})+1=p$.
\begin{equation*}
\begin{cases}
\text{If there exists }\hat{p}\text{ such that }P(a,\hat{p})+1=p,
\begin{cases}
\langle K_{m},p\rangle\subset\tau_1\langle K_{m+1},\hat{p}\rangle\text{ for }m\geq0\\
\langle K_{m},p\rangle=\langle a,p\rangle\subset\tau_2\langle b,\hat{p}\rangle\text{ for }m=-1
\end{cases}\\
\text{If there exists }\hat{p}\text{ such that }P(b,\hat{p})+1=p,
\langle K_{m},p\rangle\subset\tau_1\langle K_{m+2},\hat{p}\rangle.
\end{cases}
\end{equation*}

Thus the recursive structure contains all $\langle K_m,p\rangle$, i.e. contains all palindromes in $\mathbb{F}$.
\vspace{-0.3cm}
\small
\setlength{\unitlength}{1.2mm}
\begin{center}
\begin{picture}(70,65)
\put(0,0.5){\line(1,0){70}}
\put(0,63.5){\line(1,0){70}}
\put(0,0.5){\line(0,1){63}}
\put(11,0.5){\line(0,1){63}}
\put(22,0.5){\line(0,1){63}}
\put(33,0.5){\line(0,1){63}}
\put(44,0.5){\line(0,1){63}}
\put(56,0.5){\line(0,1){63}}
\put(70,0.5){\line(0,1){63}}
\put(4,1){32}
\put(4,4){31}
\put(4,10){30}
\put(4,16){29}
\put(4,19){28}
\put(4,25){27}
\put(4,28){26}
\put(4,34){25}
\put(4,40){24}
\put(4,43){23}
\put(4,49){22}
\put(4,55){21}
\put(4,58){20}
\put(1,61){$\langle K_4,1\rangle$}
\put(26,40){24}
\put(26,43){23}
\put(26,49){22}
\put(26,55){21}
\put(26,58){20}
\put(23,61){$\langle K_2,3\rangle$}
\put(15,1){32}
\put(15,4){31}
\put(15,10){30}
\put(15,16){29}
\put(15,19){28}
\put(15,25){27}
\put(15,28){26}
\put(15,34){25}
\put(12,37){$\langle K_3,2\rangle$}
\put(11,39.5){\line(1,0){59}}
\put(26,1){32}
\put(26,4){31}
\put(26,10){30}
\put(26,16){29}
\put(26,19){28}
\put(23,22){$\langle K_2,4\rangle$}
\put(22,24.5){\line(1,0){48}}
\put(37,40){24}
\put(37,43){23}
\put(37,49){22}
\put(34,52){$\langle K_1,5\rangle$}
\put(33,54.5){\line(1,0){37}}
\put(37,25){27}
\put(37,28){26}
\put(37,34){25}
\put(34,37){$\langle K_1,6\rangle$}
\put(37,1){32}
\put(37,4){31}
\put(37,10){30}
\put(34,13){$\langle K_1,7\rangle$}
\put(33,15.5){\line(1,0){37}}
\put(48,55){21}
\put(48,58){20}
\put(45,61){$\langle K_0,8\rangle$}
\put(48,40){24}
\put(48,43){23}
\put(45,46){$\langle K_0,9\rangle$}
\put(44,48.5){\line(1,0){26}}
\put(48,25){27}
\put(48,28){26}
\put(45,31){$\langle K_0,10\rangle$}
\put(44,33.5){\line(1,0){26}}
\put(48,16){29}
\put(48,19){28}
\put(45,22){$\langle K_0,11\rangle$}
\put(48,1){32}
\put(48,4){31}
\put(45,7){$\langle K_0,12\rangle$}
\put(44,9.5){\line(1,0){26}}
\put(62,55){21}
\put(57,61){$\langle K_{-1},13\rangle$}
\put(62,49){22}
\put(57,52){$\langle K_{-1},14\rangle$}
\put(62,40){24}
\put(57,46){$\langle K_{-1},15\rangle$}
\put(62,34){25}
\put(57,37){$\langle K_{-1},16\rangle$}
\put(62,25){27}
\put(57,31){$\langle K_{-1},17\rangle$}
\put(62,16){29}
\put(57,22){$\langle K_{-1},18\rangle$}
\put(62,10){30}
\put(57,13){$\langle K_{-1},19\rangle$}
\put(62,1){32}
\put(57,7){$\langle K_{-1},20\rangle$}
\end{picture}
\end{center}
\normalsize
\vspace{-0.4cm}
\centerline{(a)}

\small
\setlength{\unitlength}{1.85mm}
\newsavebox{\bb}
\savebox{\bb}(7,3.5){
\begin{picture}(7,3.5)
\put(0,0){\line(1,0){7}}
\put(0,3.5){\line(1,0){7}}
\put(0,0){\line(0,1){3.5}}
\put(7,0){\line(0,1){3.5}}
\end{picture}}
\newsavebox{\bbb}
\savebox{\bbb}(9,3.5){
\begin{picture}(9,3.5)
\put(0,0){\line(1,0){9}}
\put(0,3.5){\line(1,0){9}}
\put(0,0){\line(0,1){3.5}}
\put(9,0){\line(0,1){3.5}}
\end{picture}}

\begin{center}
\begin{picture}(60,42)
\put(1,41){$\langle K_4,1\rangle$}
\put(0,40){\usebox{\bb}}
\put(8,42){\vector(1,0){11.5}}
\put(8,42){\vector(1,-3){4.3}}
\put(21,41){$\langle K_2,3\rangle$}
\put(20,40){\usebox{\bb}}
\put(28,42){\vector(1,0){12}}
\put(28,42){\vector(1,-1){3}}
\put(41,41){$\langle K_0,8\rangle$}
\put(40,40){\usebox{\bb}}
\put(51,41){$\langle K_{-1},13\rangle$}
\put(50,40){\usebox{\bbb}}
\put(48,42){\vector(1,0){2}}
\put(31,36){$\langle K_1,5\rangle$}
\put(30,35){\usebox{\bb}}
\put(38,37){\vector(1,0){12}}
\put(38,37){\vector(1,-1){3}}
\put(51,36){$\langle K_{-1},14\rangle$}
\put(50,35){\usebox{\bbb}}
\put(41,31){$\langle K_0,9\rangle$}
\put(40,30){\usebox{\bb}}
\put(51,31){$\langle K_{-1},15\rangle$}
\put(50,30){\usebox{\bbb}}
\put(48,32){\vector(1,0){2}}
\put(11,26){$\langle K_3,2\rangle$}
\put(10,25){\usebox{\bb}}
\put(18,27){\vector(1,0){12}}
\put(18,27){\vector(2,-3){5}}
\put(31,26){$\langle K_1,6\rangle$}
\put(30,25){\usebox{\bb}}
\put(38,27){\vector(1,0){12}}
\put(38,27){\vector(1,-1){3}}
\put(51,26){$\langle K_{-1},16\rangle$}
\put(50,25){\usebox{\bbb}}
\put(40.5,21){$\langle K_0,10\rangle$}
\put(40,20){\usebox{\bb}}
\put(51,21){$\langle K_{-1},17\rangle$}
\put(50,20){\usebox{\bbb}}
\put(48,22){\vector(1,0){2}}
\put(21,16){$\langle K_2,4\rangle$}
\put(20,15){\usebox{\bb}}
\put(28,17){\vector(1,0){12}}
\put(28,17){\vector(1,-1){3}}
\put(40.5,16){$\langle K_0,11\rangle$}
\put(40,15){\usebox{\bb}}
\put(51,16){$\langle K_{-1},18\rangle$}
\put(50,15){\usebox{\bbb}}
\put(48,17){\vector(1,0){2}}
\put(31,11){$\langle K_1,7\rangle$}
\put(30,10){\usebox{\bb}}
\put(38,12){\vector(1,0){12}}
\put(38,12){\vector(1,-1){3}}
\put(51,11){$\langle K_{-1},19\rangle$}
\put(50,10){\usebox{\bbb}}
\put(40.5,6){$\langle K_0,12\rangle$}
\put(40,5){\usebox{\bb}}
\put(51,6){$\langle K_{-1},20\rangle$}
\put(50,5){\usebox{\bbb}}
\put(48,7){\vector(1,0){2}}
\end{picture}
\end{center}
\normalsize
\vspace{-1.2cm}
\centerline{(b)}
\centerline{Fig.1: The recursive structure of palindromes from $\langle K_4,1\rangle$.}

\vspace{0.2cm}

By the recursive structure, the property below give the relation between the number of palindromes ending at position $pf_{m+1}+(\lfloor\phi p\rfloor+1) f_{m}+i-2$ and $f_{m+2}+i-2$, which are the $i$-th element in $\langle K_m,p\rangle$ and $\langle K_m,1\rangle$ respectively, where $1\leq i\leq f_{m+1}$.

\begin{property}[]\label{P5.5} For $1\leq i\leq f_{m+1}$,
\begin{equation*}
\begin{split}
&\{\omega:\omega\in\mathcal{P},\omega\triangleright\mathbb{F}[1,f_{m+2}+i-2]\}\\
=&\{\omega:\omega\in\mathcal{P},\omega\triangleright\mathbb{F}[1,pf_{m+1}+(\lfloor\phi p\rfloor+1) f_{m}+i-2],Ker(\omega)=K_j,-1\leq j\leq m\}.
\end{split}
\end{equation*}
\end{property}

For instance, taking $m=2$, $p=3$, $i=2$. All palindromes ending at position 8 are $\{a,aba,ababa\}$.
All palindromes ending at position 21 are $\{a,aba,ababa,\omega\}$ where $\omega=ababaababa$.
Since $Ker(a)=K_{-1}$, $Ker(aba)=K_0$, $Ker(ababa)=K_2$ and $Ker(\omega)=K_3$,
only $\{a,aba,ababa\}$ are palindromes with kernel $K_{j}$, $-1\leq j\leq 2$.

\section{The number of repeated palindrome occurrences in $\mathbb{F}[1,n]$}

By the recursive structure of palindromes in $\mathbb{F}$, we can count the number of palindromes end at position $n$, denoted by $A(n):=|\{\omega_p:\omega_p\triangleright\mathbb{F}[1,n]\}|.$
And obviously the number of repeated palindrome occurrences in $\mathbb{F}[1,n]$ is $B(n):=\sum_{i=1}^nA(i)$.

By Property \ref{P4.1}, \ref{P4.2}, \ref{P5.5}, and the recursive structure of palindromes, we have:

\begin{theorem}\label{P4.3} The vectors $[A(1)]=[1]$, $[A(2),A(3)]=[1,2]$ and for $m\geq3$
\begin{equation*}
\begin{split}
&[A(f_{m}-1),\cdots,A(f_{m+1}-2)]\\
=&[A(f_{m-2}-1),\cdots,A(f_{m-1}-2),A(f_{m-1}-1),\cdots,A(f_{m}-2)]
+[\underbrace{1,\cdots,1}_{f_{m-1}}].
\end{split}
\end{equation*}
\end{theorem}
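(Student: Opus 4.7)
The plan is to induct on $m$ using the recursive structure established in Section 5. The base cases $[A(1)]=[1]$ and $[A(2),A(3)]=[1,2]$ follow by direct inspection of $\mathbb{F}[1,3]=aba$: the palindromes ending at positions $1,2,3$ are $\{a\}$, $\{b\}$, and $\{a,aba\}$ respectively. For the inductive step, fix $m\geq 3$. By the chain structure (Property \ref{K}) the range $[f_m-1,f_{m+1}-2]$ equals $\langle K_{m-2},1\rangle$, and by Property \ref{P4.1} this decomposes as
$$\langle K_{m-2},1\rangle=\langle K_{m-4},3\rangle\sqcup\langle K_{m-3},2\rangle,$$
a first block of $f_{m-3}$ consecutive positions followed by a second block of $f_{m-2}$ consecutive positions. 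These match termwise the two pieces of the right-hand side vector, namely $[A(f_{m-2}-1),\cdots,A(f_{m-1}-2)]$ coming from $\langle K_{m-4},1\rangle$ and $[A(f_{m-1}-1),\cdots,A(f_m-2)]$ coming from $\langle K_{m-3},1\rangle$.

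Let $n$ be the $i$-th element of $\langle K_{m-4},3\rangle$ and let $p_i$ be the $i$-th element of $\langle K_{m-4},1\rangle=[f_{m-2}-1,f_{m-1}-2]$. Property \ref{P5.5}, applied with $m\leftarrow m-4$ and $p\leftarrow 3$, identifies the palindromes ending at $p_i$ with the palindromes ending at $n$ whose kernel is some $K_j$, $-1\leq j\leq m-4$. Since any palindrome with kernel $K_l$ has smallest ending position $f_{l+2}-1$, every palindrome ending at $p_i\leq f_{m-1}-2$ already has kernel at most $K_{m-4}$, so this common count equals $A(p_i)$. The task therefore reduces to showing that exactly one more palindrome ends at $n$, with kernel lying in $\{K_{m-3},K_{m-2},K_{m-1},\dots\}$.

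I claim this extra count is exactly $1$ and is contributed by the unique palindrome with kernel $K_{m-2}$ given by expression (1) with the index determined by $n\in\langle K_{m-2},1\rangle$. Kernels $K_l$ with $l\geq m-1$ are immediately excluded because $f_{l+2}-1\geq f_{m+1}-1>n$. The hard part will be ruling out kernel $K_{m-3}$. Here I plan to invoke the recursive structure of Section 5: every set $\langle K_l,p\rangle$ sits in a unique ancestor chain-block $\langle K_j,1\rangle$ (this is exactly the content of the ``on the other hand'' discussion after Property \ref{P4.2}), and in particular the only $p$ with $\langle K_{m-3},p\rangle\subset\langle K_{m-2},1\rangle$ is $p=2$, while $\langle K_{m-3},2\rangle$ is disjoint from the first block $\langle K_{m-4},3\rangle$. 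Thus $n$ belongs to no $\langle K_{m-3},p\rangle$, and $A(n)=A(p_i)+1$. The mirror argument, using Property \ref{P5.5} with parameters $(m-3,2)$, yields $A(n)=A(q_j)+1$ whenever $n$ is the $j$-th element of the second block $\langle K_{m-3},2\rangle$ and $q_j$ is the $j$-th element of $\langle K_{m-3},1\rangle$. Concatenating the two blocks reproduces exactly the vector identity stated.
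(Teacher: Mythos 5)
Your proposal is correct and follows exactly the route the paper intends: the paper's ``proof'' is only the one-line citation of Properties \ref{P4.1}, \ref{P4.2}, \ref{P5.5} and the recursive structure, and your argument (decomposing $\langle K_{m-2},1\rangle=\langle K_{m-4},3\rangle\sqcup\langle K_{m-3},2\rangle$, applying Property \ref{P5.5} blockwise, and using the uniqueness of the ancestor chain-block to show the surplus at each position is exactly the single palindrome with kernel $K_{m-2}$) is a faithful and valid filling-in of those omitted details. No gaps.
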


The first few values of $A(n)$ are
$[A(1)]=[1]$,
$[A(2),A(3)]=[1,2]$,

$[A(4),A(5),A(6)]=[1,1,2]+[1,1,1]=[2,2,3]$,

$[A(7),\cdots,A(11)]=[1,2,2,2,3]+[\underbrace{1,\cdots,1}_5]=[2,3,3,3,4]$,

$[A(12),\cdots,A(19)]=[2,2,3,2,3,3,3,4]+[\underbrace{1,\cdots,1}_8]=[3,3,4,3,4,4,4,5]$.

\begin{theorem}[] The number of repeated palindrome occurrences in $\mathbb{F}[1,n]$ is $B(n)=\sum\limits_{i=1}^nA(i)$.
\end{theorem}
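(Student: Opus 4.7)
The plan is to prove the identity by partitioning the set of all palindrome occurrences inside $\mathbb{F}[1,n]$ according to the position of their last letter. First I would recall the definition: a ``repeated palindrome occurrence in $\mathbb{F}[1,n]$'' is a pair $(\omega,p)$ with $\omega\in\mathcal{P}$ such that the $p$-th occurrence $\omega_p$ satisfies $\omega_p\prec\mathbb{F}[1,n]$, and $B(n)$ is the cardinality of this set. Since every such occurrence starts at some position $L(\omega,p)\geq 1$ and ends at $P(\omega,p)=L(\omega,p)+|\omega|-1$, the inclusion $\omega_p\prec\mathbb{F}[1,n]$ is equivalent to $1\leq P(\omega,p)\leq n$.

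Next I would write the set of occurrences in $\mathbb{F}[1,n]$ as the disjoint union
\[
\{(\omega,p): \omega_p\prec\mathbb{F}[1,n]\}
 = \bigsqcup_{i=1}^{n}\{(\omega,p): P(\omega,p)=i\},
\]
which is disjoint because each occurrence $\omega_p$ has a single, well-defined endpoint $P(\omega,p)$. For a fixed $i\leq n$, the condition $P(\omega,p)=i$ is exactly the condition that $\omega_p$ is a suffix of $\mathbb{F}[1,i]$, i.e. $\omega_p\triangleright\mathbb{F}[1,i]$, so by the definition $A(i):=|\{\omega_p:\omega_p\triangleright\mathbb{F}[1,i]\}|$ the cardinality of this class is $A(i)$. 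Taking cardinalities on both sides of the decomposition yields $B(n)=\sum_{i=1}^{n}A(i)$.

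There is essentially no genuine obstacle in this argument; it is a direct bookkeeping step that converts a count of occurrences in a prefix into a sum over endpoints, and the recursive structure of Theorem~\ref{P4.3} is what makes $A(i)$ effectively computable and thus makes the formula useful rather than merely tautological. The only point that merits explicit mention is the disjointness of the union, which follows from the uniqueness of the endpoint $P(\omega,p)$ attached to any occurrence.
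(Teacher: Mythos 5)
Your argument is correct and coincides with what the paper intends: the paper simply declares this identity ``obvious'' from the definition $A(i)=|\{\omega_p:\omega_p\triangleright\mathbb{F}[1,i]\}|$, and your partition of the occurrences $(\omega,p)$ with $\omega_p\prec\mathbb{F}[1,n]$ according to the unique endpoint $P(\omega,p)=i$ is exactly the bookkeeping that justifies it. No further comment is needed.
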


By considering $B(f_{m}-2)$ for $m\geq2$, we can determine the expressions of $B(f_{m})$ etc, and give a fast algorithm of $B(n)$ for all $n\geq1$.

Let $C(m)=B(f_{m+1}-2)-B(f_{m}-2)$, then an immediate corollary of Property \ref{P4.3} is
$$\begin{array}{c}
C(m)=\sum\limits_{n=f_{m}-1}^{f_{m+1}-2}A(n)
=\sum\limits_{n=f_{m-2}-1}^{f_{m-1}-2}A(n)+\sum\limits_{n=f_{m-1}-1}^{f_{m}-2}A(n)+f_{m-1}.
\end{array}$$
This means $C(m)=C(m-2)+C(m-1)+f_{m-1}$.

By induction, we can have Property 6.3 and Remark 6.4 easily.

\begin{property} $C(m)
=\frac{m+1}{5}f_{m+1}+\frac{m-2}{5}f_{m-1}$ for $m\geq1$.
\end{property}

\begin{remark}[] In fact, we get the expression of $C(m)$ by an interesting identity that
$$\begin{array}{c}
\sum\limits_{i=-1}^{m}f_if_{m-i-1}=\frac{m+2}{5}f_{m+2}+\frac{m+4}{5}f_{m}\text{ for } m\geq1,
\end{array}$$
\end{remark}

By $B(f_{m}-2)=\sum_{n=1}^{m-1}C(n)$,
$C(m)=\frac{m+1}{5}f_{m+1}+\frac{m-2}{5}f_{m-1}$ and $\sum_{i=-1}^mf_i= f_{m+2}-1$, we have

\begin{property} $B(f_{m}-2)=\frac{m-3}{5}f_{m+2}+\frac{m-1}{5}f_{m}+2$ for $m\geq2$.
\end{property}

\vspace{0.2cm}

By Theorem \ref{P4.3}, we can get the property below easily by induction.

\begin{property}\
$A(f_{m}-2)=m-1$, $A(f_{m}-1)=\lfloor\frac{m+1}{2}\rfloor$, $A(f_{m})=\lfloor\frac{m+2}{2}\rfloor$, $A(f_{m}-1)+A(f_{m})=m+1$.
\end{property}

Thus, by the two properties above, we can determine the expressions of $B(f_{m}-3)$, $B(f_{m}-1)$, $B(f_{m})$.
Especially, since $B(f_{m})=B(f_{m}-2)+A(f_{m}-1)+A(f_{m})$, we have

\begin{theorem}[]
The number of repeated palindrome occurrences in $F_m$ is
$$\begin{array}{c}
B(f_m)=\frac{m-3}{5}f_{m+2}+\frac{m-1}{5}f_{m}+m+3.
\end{array}$$
\end{theorem}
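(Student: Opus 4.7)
The plan is to derive $B(f_m)$ by a single-step telescoping from $B(f_m-2)$, feeding in the two closed forms already supplied by Properties 6.5 and 6.6. Since $B(n)=\sum_{i=1}^nA(i)$ by definition, one immediately obtains the increment
$$B(f_m)-B(f_m-2)=A(f_m-1)+A(f_m),$$
so the theorem reduces to combining a known expression for $B(f_m-2)$ with a known expression for the two-term sum $A(f_m-1)+A(f_m)$.

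For the first ingredient I would quote Property 6.5 verbatim: $B(f_m-2)=\frac{m-3}{5}f_{m+2}+\frac{m-1}{5}f_m+2$. For the second I would use the last identity of Property 6.6, namely $A(f_m-1)+A(f_m)=m+1$; this is exactly the case where the two floors $\lfloor (m+1)/2\rfloor$ and $\lfloor (m+2)/2\rfloor$ collapse into an integer, since one of $m+1,m+2$ is even. Adding the two contributions gives
$$B(f_m)=\tfrac{m-3}{5}f_{m+2}+\tfrac{m-1}{5}f_m+2+(m+1)=\tfrac{m-3}{5}f_{m+2}+\tfrac{m-1}{5}f_m+m+3,$$
which is exactly the claimed formula; it is valid for $m\geq 2$, matching the range in which Property 6.5 was stated.

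The genuine work of the argument lives in the two invoked properties, not in the final assembly. Property 6.5 came by summing Property 6.3's formula for $C(m)$ via the identity $\sum_{i=-1}^m f_i=f_{m+2}-1$, and Property 6.6 came by induction on $m$ from the vector recurrence of Theorem 6.2. Consequently I anticipate no real obstacle here; the only sanity check I would add is to compare the formula against the numerical table preceding Property 6.3 for, say, $m=4$ (where $B(f_4)=B(5)=A(1)+\dots+A(5)=1+1+2+2+2=8$ and the formula gives $\frac{1}{5}\cdot 13+\frac{3}{5}\cdot 5+7=2.6+3+7.4\ldots$—so I would carefully verify that the integer combination $\frac{m-3}{5}f_{m+2}+\frac{m-1}{5}f_m$ is indeed an integer for each $m$, which follows from Fibonacci residues mod $5$ and was already implicit in Property 6.5). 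Apart from this routine check, the proof is a one-line addition.
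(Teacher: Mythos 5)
Your proof is correct and is exactly the paper's argument: the paper derives the theorem from $B(f_m)=B(f_m-2)+A(f_m-1)+A(f_m)$ together with Properties 6.5 and 6.6, just as you do. (The only slip is in your side sanity check, where you used the convention $f_4=5$ instead of the paper's $f_{-1}=f_0=1$, $f_4=8$; with the paper's indexing the check goes through, e.g. $B(8)=16=\frac{1}{5}f_6+\frac{3}{5}f_4+7$.)
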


For instance, taking $m=5$, $B(13)=\frac{2}{5}f_{7}+\frac{4}{5}f_{5}+10
=\frac{2}{5}\times34+\frac{4}{5}\times13+8=32$.

\vspace{0.2cm}

For any $n\geq1$, let $m$ such that $f_m\leq n+1< f_{m+1}$. Since we already determine the expression of $B(f_m-2)$, in order to give a fast algorithm
of $B(n)$, we only need to calculate $\sum_{i=f_{m}-1}^{n}A(i)$. One method is calculating $A(n)$ by Theorem \ref{P4.3}, the other method is using the corollary as below.

\begin{corollary}\label{c5} For $n\geq1$, let $m$ such that $f_m\leq n+1< f_{m+1}$, then
\begin{equation*}
\sum_{i=f_{m}-1}^{n}A(i)=
\begin{cases}
\sum\limits_{i=f_{m-2}-1}^{n-f_{m-1}}A(i)+n-f_{m}+2,&n+1< 2f_{m-1};\\
\sum\limits_{i=f_{m-1}-1}^{n-f_{m-1}}A(i)+n+\frac{m-11}{5}f_{m-1}+\frac{m+1}{5}f_{m-3}+2,
&otherwise.
\end{cases}
\end{equation*}
\end{corollary}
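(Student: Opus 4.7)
The plan is to treat this as a direct bookkeeping consequence of Theorem 6.1 (the recursion for the vector $[A(f_m-1),\ldots,A(f_{m+1}-2)]$), combined with Property 6.3 for the closed form of $C(m)$. First I would fix the substitution $k=n-f_m+1$, so that $0\le k\le f_{m-1}-1$ as $n$ ranges over $[f_m-1,\,f_{m+1}-2]$; a short arithmetic remark using $f_m=f_{m-1}+f_{m-2}$ shows $2f_{m-1}-f_m=f_{m-3}$, so the dichotomy ``$n+1<2f_{m-1}$ vs.\ otherwise'' is exactly the dichotomy ``$k<f_{m-3}$ vs.\ $k\ge f_{m-3}$''. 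This is the split that matches the two halves of the concatenation appearing in Theorem 6.1.

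Unpacking Theorem 6.1 entrywise, for $0\le j<f_{m-3}$ one has $A(f_m-1+j)=A(f_{m-2}-1+j)+1$, and for $f_{m-3}\le j<f_{m-1}$ one has $A(f_m-1+j)=A(f_{m-1}-1+(j-f_{m-3}))+1$. In Case 1 ($k<f_{m-3}$), only the first range is involved, so
\[
\sum_{i=f_m-1}^{n}A(i)=\sum_{j=0}^{k}\bigl[A(f_{m-2}-1+j)+1\bigr]=\sum_{i=f_{m-2}-1}^{n-f_{m-1}}A(i)+(k+1),
\]
after reindexing $i=f_{m-2}-1+j$ (the top index becomes $f_{m-2}-1+k=n-f_{m-1}$ using $f_m-f_{m-2}=f_{m-1}$). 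Since $k+1=n-f_m+2$, this is exactly the first branch.

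In Case 2 ($k\ge f_{m-3}$), I would split the sum at $j=f_{m-3}$. The first piece, after applying the first entrywise relation and reindexing, equals
\[
\sum_{i=f_{m-2}-1}^{f_{m-1}-2}A(i)+f_{m-3}=C(m-2)+f_{m-3},
\]
and the second piece, after applying the second relation and reindexing $i=f_{m-1}-1+(j-f_{m-3})$, equals $\sum_{i=f_{m-1}-1}^{n-f_{m-1}}A(i)+(k-f_{m-3}+1)$. Adding these gives
\[
\sum_{i=f_m-1}^{n}A(i)=\sum_{i=f_{m-1}-1}^{n-f_{m-1}}A(i)+C(m-2)+n-f_m+2.
\]
Plugging in $C(m-2)=\tfrac{m-1}{5}f_{m-1}+\tfrac{m-4}{5}f_{m-3}$ from Property 6.3 reduces the claim to the purely arithmetic identity
\[
\tfrac{m-1}{5}f_{m-1}+\tfrac{m-4}{5}f_{m-3}-f_m=\tfrac{m-11}{5}f_{m-1}+\tfrac{m+1}{5}f_{m-3},
\]
which after collecting coefficients is just $2f_{m-1}-f_m=f_{m-3}$, already noted above.

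The main obstacle is not difficulty but bookkeeping: one must be careful that the Theorem 6.1 concatenation splits the block $[f_m-1,f_{m+1}-2]$ at the \emph{relative} position $f_{m-3}$ (not at something like the arithmetic midpoint), and that the reindexing keeps the top of the inner sum exactly at $n-f_{m-1}$ in both cases. Once the two identities $2f_{m-1}-f_m=f_{m-3}$ and the Property 6.3 formula for $C(m-2)$ are in hand, the rest is mechanical.
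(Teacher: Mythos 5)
Your proposal is correct and is essentially the paper's own proof: both apply Theorem 6.1 entrywise, handle the first branch by direct reindexing, and in the second branch split the sum at the absolute position $2f_{m-1}-1$ (your relative index $j=f_{m-3}$, since $f_m+f_{m-3}=2f_{m-1}$), identify the first piece as $C(m-2)+f_{m-3}$, and substitute Property 6.3. Your version merely makes the index bookkeeping and the final arithmetic reduction to $2f_{m-1}-f_m=f_{m-3}$ more explicit.
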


\begin{proof} By Theorem \ref{P4.3},
when $f_m\leq n+1< 2f_{m-1}$,
$$\begin{array}{c}
\sum\limits_{i=f_{m}-1}^{n}A(i)=\sum\limits_{i=f_{m-2}-1}^{n-f_{m-1}}(A(i)+1)
=\sum\limits_{i=f_{m-2}-1}^{n-f_{m-1}}A(i)+n-f_{m}+2.
\end{array}$$

When $2f_{m-1}\leq n+1< f_{m+1}$, $\sum_{i=f_{m}-1}^{n}A(i)
=\sum\limits_{i=f_{m}-1}^{2f_{m-1}-2}A(i)+\sum\limits_{i=2f_{m-1}-1}^{n}A(i)$,
where
$$\begin{array}{rl}
\sum\limits_{i=f_{m}-1}^{2f_{m-1}-2}A(i)=&\sum\limits_{i=f_{m-2}-1}^{f_{m-1}-2}A(i)+ f_{m-3}=C(m-2)+f_{m-3}\\
=&\frac{m-1}{5}f_{m-1}+\frac{m-4}{5}f_{m-3}+f_{m-3}
=\frac{m-1}{5}f_{m-1}+\frac{m+1}{5}f_{m-3}.\\
\sum\limits_{i=2f_{m-1}-1}^{n}A(i)=&\sum\limits_{i=f_{m-1}-1}^{n-f_{m-1}}A(i)+n-2f_{m-1}+2.
\end{array}$$

Thus we have the expression that
$\sum\limits_{i=f_{m}-1}^{n}A(i)
=\sum\limits_{i=f_{m-1}-1}^{n-f_{m-1}}A(i)+n+\frac{m-11}{5}f_{m-1}+\frac{m+1}{5}f_{m-3}+2,$
which yields the conclusion holds.
\end{proof}

\noindent\textbf{Example.} We calculate $\sum_{i=20}^{29}A(i)$. One method is using Theorem \ref{P4.3}, we have

$[A(1)]=[1]$,
$[A(2),A(3)]=[1,2]$,
$[A(4),A(5),A(6)]=[2,2,3]$,
$[A(7),\cdots,A(11)]=[2,3,3,3,4]$,

$[A(12),\cdots,A(19)]=[3,3,4,3,4,4,4,5]$,
$[A(20),\cdots,A(32)]=[3,4,4,4,5,4,4,5,4,5,5,5,6]$.

So $\sum_{i=20}^{29}A(i)=3+4+4+4+5+4+4+5+4+5=42$.

The other method is using Corollary \ref{c5}.

Since $f_6=21\leq 29+1< f_{7}=34$, $m=6$. Moreover $2f_5=26\leq 29+1$,
$$\begin{array}{c}
\sum\limits_{i=f_{6}-1}^{29}A(i)
=\sum\limits_{i=f_{5}-1}^{29-f_{5}}A(i)+29+\frac{-5}{5}f_{5}+\frac{7}{5}f_{3}+2
=\sum\limits_{i=f_{5}-1}^{16}A(i)+25.
\end{array}$$

Similarly, $\sum\limits_{i=f_{5}-1}^{16}A(i)=\sum\limits_{i=f_{4}-1}^{8}A(i)+12$ and
$\sum\limits_{i=f_{4}-1}^{8}A(i)=A(2)+A(3)+2=5$.

Thus $\sum_{i=f_{6}-1}^{29}A(i)=25+12+5=42$.

\begin{algorithm}[The number of repeated palindrome occurrences, $B(n)$]\

Step 1. Find the $m$ such that $f_m\leq n+1< f_{m+1}$, then $B(n)=B(f_m-2)+\sum_{i=f_{m}-1}^{n}A(i)$.

Step 2. Calculate $B(f_m-2)$ by the expression in Property 6.5;

Calculate $\sum_{i=f_{m}-1}^{n}A(i)$ by the recursive relation in Corollary \ref{c5} or Theorem \ref{P4.3}.
\end{algorithm}

For instance, since $f_6\leq 29+1< f_{7}$, we have $m=6$.
By Theorem 6.2,
$B(29)=B(19)+\sum_{i=20}^{29}A(i)$.
By Property 6.5, $B(19)=
\frac{3}{5}f_{8}+\frac{5}{5}f_{6}+2=56$.
By Theorem \ref{P4.3} or Corollary \ref{c5}, $\sum_{i=20}^{29}A(i)=42$.
Thus $B(29)=98$.

\vspace{0.5cm}

\noindent\textbf{\Large{Acknowledgments}}

\vspace{0.4cm}

The research is supported by the Grant NSF No.11431007, No.11271223 and No.11371210.

\end{CJK*}

\begin{thebibliography}{AB}

\bibitem{AS2003}  J.M.Allouche, J.Shallit, Automatic sequences: Theory, applications, generalizations. Cambridge University Press, Cambridge, 2003.

\bibitem{B1966} J.Berstel, Recent results in Sturmian words, in J.Dassow, A.Salomaa (Eds.), Developments in Language Theory, World Scientific, Singapore. (1966) 13-24.

\bibitem{B1980} J.Berstel, Mot de Fibonacci, S$\acute{e}$minaire d'informatique th$\acute{e}$rique, L.I.T.P., Paris, Ann$\acute{e}$e 1980/1981, pp.57-78.
    
\bibitem{B1980} L.Balkov$\acute{a}$, E.Pelantov$\acute{a}$, $\check{S}$.Starosta, Palindromes in infinite ternary words. RAIRO-Theoretical Informatics and Applications, 43.4 (2009) 687-702.

\bibitem{C1993} W.-F.Chuan, Symmetric Fibonacci words, Fibonacci Quart 3 (1993) 251-255.

\bibitem{C2008} J.Cassaigne, On extremal properties of the Fibonacci word. RAIRO-Theoretical Informatics and Applications,42.4 (2008) 701-715.

\bibitem{CH2005} W.-F.Chuan, H.-L.Ho, Locating factors of the infinite Fibonacci word,
Theoretical Computer Science. 349 (2005) 429-442.

\bibitem{CW2003} W.-T.Cao, Z.-Y.Wen, Some properties of the factors of Sturmian sequences,
Theoretical Computer Science. 304 (2003) 365-385.

\bibitem{D1981} A.De Luca, A combinatorial property of the Fibonacci words, Information Processing Letters. 12.4 (1981) 193-195.

\bibitem{D1995} X.Droubay, Palindromes in the Fibonacci Word, Information Processing Letters. 55 (1995) 217-221.

\bibitem{D1998} F.Durand, A characterization of substitutive sequences using return words, Discrete Math. 179 (1998) 89-101.

\bibitem{DMSS2014} C.F.Du, H.Mousavi, L.Schaeffer, J.Shallit, Decision algorithms for fibonacci-automatic words, with applications to pattern avoidance. 2014.  arXiv:1406.0670.

\bibitem{DSS2015} C.F.Du, L.Schaeffer, J.Shallit, Decision Algorithms for Fibonacci-Automatic Words, I: Basic Results. Submitted, 2015.

\bibitem{DMRSS2015} C.F.Du, H.Mousavi, E.Rowland, L.Schaeffer, J.Shallit, Decision Algorithms for Fibonacci-Automatic Words, II: Related Sequences and Avoidability. Submitted, 2015.

\bibitem{DMSS2015} C.F.Du, H.Mousavi, L.Schaeffer, J.Shallit, Decision Algorithms for Fibonacci-Automatic Words, III: Enumeration and Abelian Properties. Submitted, 2015.

\bibitem{G2006} A.Glen, On Sturmian and Episturmian Words, and Related Topics, PhD thesis, The University of Adelaide, Australia. 2006.

\bibitem{G2006-1} A.Glen, Occurrences of palindromes in characteristic Sturmian words,
Theoretical Computer Science. 352 (2006) 31-46.

\bibitem{HW2015-1} Y.-K.Huang, Z.-Y.Wen, The sequence of return words of the Fibonacci sequence, Theoretical Computer Science. 593 (2015) 106-116.

\bibitem{HW2015-2} Y.-K.Huang, Z.-Y.Wen, Kernel words and gap sequence of the Tribonacci sequence, Acta Mathematica Scientia (Series B). 36.1 (2016) 173-194.


\bibitem{L1983} M.Lothaire, Combinatorics on words, in: Encyclopedia of Mathematics and its applications, Vol.17, Addison-Wesley, Reading, MA, 1983.

\bibitem{L2002} M.Lothaire, Algebraic combinatorics on words, Cambridge Univ. Press, Cambridge, 2002.

\bibitem{WW1994} Z.-X.Wen, Z.-Y.Wen. Some properties of the singular words of the Fibonacci word, European J. Combin. 15 (1994) 587-598.

\end{thebibliography}
\end{document}